\numberwithin{equation}{section}
\theoremstyle{plain} \newtheorem{thm}{Theorem}[section]
\newtheorem{lem}[thm]{Lemma}
\newtheorem{cor}[thm]{Corollary}
\newtheorem{claim}[thm]{Claim}
\theoremstyle{definition}
\newtheorem{definition}[thm]{Definition}
\newtheorem{set-up}[thm]{Set-Up}
\newcounter{tmp}
\begin{document}

\title{On Irrationality of Hypersurfaces in $\mathbf{P}^{n+1}$}

\author{Ruijie Yang}
\address{Stony Brook University, Stony Brook, NY 11794}
\email{ruijie.yang@stonybrook.edu}

\begin{abstract}
The purpose of this note is to study various measures of irrationality for hypersurfaces in projective spaces which were proposed recently by \cite{BDELU},\cite{Bastianelli}. In particular, we answer the question raised by Bastianelli that if $X\subset \mathbf{P}^{n+1}$ is a very general smooth hypersurface
of dimension $n$ and degree $d\geq 2n+2$, then $\text{stab.irr}(X)=\text{uni.irr}(X)=d-1$. As a corollary, we prove that $\text{irr}(X\times \mathbf{P}^m)=\text{irr}(X)$ for any integer $m\geq 1$.
\end{abstract}

\maketitle

\section{Introduction}

There has been recent interest in studying measures of irrationality for algebraic varieties \cite{BDELU},\cite{Bastianelli}. For example, given an irreducible projective variety $X$ of dimension $n$, the \textit{degree of irrationality} of $X$ is defined as
\[ \text{irr}(X) := \text{min }\{ \delta>0 \ | \ \exists \text{ degree $\delta$ rational covering } X\dashrightarrow \mathbf{P}^n \}. \]
Therefore $\text{irr}(X)=1$ if and only if $X$ is rational. It was established in \cite{BCD},\cite{BDELU} that if $X\subset \mathbf{P}^{n+1}$ is a very general smooth hypersurface of dimension $n$ and degree $d\geq 2n+1$, then $\text{irr}(X)=d-1$.

By analogy with notions of stable rationality and unirationality, Bastianelli \cite{Bastianelli} introduced two birational invariants measuring the failure of a projective variety to be stably rational or unirational:
\[ \text{stab.irr}(X):=\text{min }\{ \text{ irr}(X\times \mathbf{P}^m) \ | \ m \in \mathbf{N}\};   \]
\[ \text{uni.irr}(X):=\text{min }\{ \text{ irr}(T) \ | \ \exists \text{ a rational covering } T \dashrightarrow X \}.\] 
Thus 
\begin{eqnarray*}
\text{stab.irr}(X)=1 &\iff& \text{ X is stably rational}, \\
\text{uni.irr}(X)=1 &\iff& \text{ X is unirational},
\end{eqnarray*}
and in general one has the inequalities 
\[ \mathrm{uni.irr}(X) \leq \mathrm{stab.irr}(X) \leq \mathrm{irr}(X). \]
It was established by Bastianelli in \cite{Bastianelli} that if $X$ is a very general surface of degree $d\geq 5$, then
\[ \mathrm{stab.irr}(X)=\mathrm{uni.irr}(X)=d-1,\]
and Bastianelli also classified the exceptional cases. Here we extend the computation to hypersurfaces of all dimensions.

In fact we will consider more generally correspondences on $\mathbf{P}^n\times X$. We consider the following birational invariant: 
\[ \text{corr}(X):=\text{min }\{ \text{ deg}(\pi_1) \ | \ \exists \text{ a correspondence } \Gamma \subset \mathbf{P}^n \times X \}.\]
where $\pi_1$ is the first projection map from $\Gamma$ to $\mathbf{P}^n$ and $\Gamma$ is any subvariety of $\mathbf{P}^n \times X$ that both dominates $\mathbf{P}^n$ and $X$.
 

Our first results concern $\text{corr}(X)$:
\begingroup
\setcounter{tmp}{\value{thm}}
\setcounter{thm}{0} 
\renewcommand\thethm{\Alph{thm}}
\begin{thm}
Let $X\subset \mathbf{P}^{n+1}$ be a very general smooth hypersurface of degree $d\geq 2n+2$. Then
\[ \mathrm{corr}(X)=d-1.\]
\end{thm}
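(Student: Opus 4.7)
The plan is to prove the two inequalities separately. The upper bound $\mathrm{corr}(X) \leq d-1$ is immediate: every rational cover $X \dashrightarrow \mathbf{P}^n$ of degree $\delta$ has a graph in $\mathbf{P}^n \times X$ that is a correspondence with $\deg \pi_1 = \delta$, so $\mathrm{corr}(X) \leq \mathrm{irr}(X) \leq d-1$ by the theorem of \cite{BCD,BDELU} recalled in the introduction. For the lower bound, I would argue by contradiction: suppose there is an $n$-dimensional correspondence $\Gamma \subset \mathbf{P}^n \times X$ with both projections dominant and $\deg \pi_1 = e \leq d-2$. After desingularization, we may assume $\Gamma$ is smooth and both $\pi_1, \pi_2$ are finite and surjective. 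For general $y \in \mathbf{P}^n$, the reduced fiber $\pi_1^{-1}(y)$ consists of $e$ points lying in $\{y\} \times X$, so their $X$-coordinates form a reduced $0$-cycle $Z_y \subset X$ of length $e$. Because $\pi_2$ is dominant, the $Z_y$ sweep out $X$, giving a covering family of $0$-cycles parameterized by the rational $n$-fold $\mathbf{P}^n$.

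The crucial technical step is to establish the \emph{Cayley--Bacharach (CB) property} for $Z_y$ with respect to $|K_X| = |\mathcal{O}_X(d-n-2)|$: for general $y$ and each $p \in Z_y$, every section $s \in H^0(X, K_X)$ vanishing on $Z_y \setminus \{p\}$ must also vanish at $p$. I would prove this by generalizing the trace/residue argument of \cite{BDELU} for rational covers: $\pi_2^* s$ sits as a section of a subsheaf of $K_\Gamma$, and its Grothendieck trace along $\pi_1$ is a global section of $K_{\mathbf{P}^n} = \mathcal{O}_{\mathbf{P}^n}(-n-1)$, which must vanish identically. Evaluated at $y$, the trace is a sum of residues over the $e$ preimages, so the vanishing of all but one residue forces the last to vanish, giving CB. The only ingredients are the generic finiteness of $\pi_1$ and $H^0(\mathbf{P}^n, K_{\mathbf{P}^n}) = 0$, so the proof adapts cleanly from the rational-map case. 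Combined with the geometric theorem of \cite{BCD,BDELU}---that on a very general smooth hypersurface of degree $d \geq 2n+1$ in $\mathbf{P}^{n+1}$, any $0$-subscheme of length $\leq d-1$ satisfying CB with respect to $|K_X|$ lies on a line---this implies each $Z_y$ is contained in a line $L_y \subset \mathbf{P}^{n+1}$. Since a very general $X$ of degree $d \geq 2n+2$ contains no lines (its Fano scheme is empty by expected-dimension count), $L_y \not\subset X$ and meets $X$ scheme-theoretically in exactly $d$ points, yielding a covering family of secant lines $\{L_y\}_{y \in \mathbf{P}^n}$ with distinguished subscheme $Z_y$ of length $e$ and residual $W_y := (L_y \cap X) - Z_y$ of length $d-e \geq 2$.

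The final step---deriving a contradiction from this covering family of lines---is where I expect the main obstacle to lie. Because $X$ is of general type and not uniruled, the union $\bigcup L_y$ cannot equal $X$ and hence has dimension $n+1$, i.e., equals all of $\mathbf{P}^{n+1}$; thus through a general point of $X$ passes a positive-dimensional subfamily of the $L_y$. The residual cycles $W_y$ define a second correspondence $\Gamma' \subset \mathbf{P}^n \times X$ with $\deg \pi_1' = d-e$; if $e > d/2$, this has degree smaller than $e$, contradicting minimality (once one verifies $\Gamma'$ still dominates $X$). The cases $e \leq d/2$ require either iterating the residual construction---each new residual cycle remains CB with respect to $|K_X|$ so long as its length stays below $d$---or a direct Noether--Lefschetz-type rigidity argument ruling out rational $n$-dimensional families of multisecant lines on a very general hypersurface of degree $d \geq 2n+2$. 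Making this reduction uniform in $e$ and handling the edge case where $\Gamma'$ fails to dominate $X$ is the technical heart of the argument.
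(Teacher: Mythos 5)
Your upper bound and the first two-thirds of the lower bound coincide with the paper's argument: the Cayley--Bacharach step is immediate here because $H^0(\mathbf{P}^n,K_{\mathbf{P}^n})=0$ forces the trace map of any correspondence $\Gamma\subset\mathbf{P}^n\times X$ to vanish, so one quotes \cite[Theorem 2.5]{BCD} (Theorem \ref{thm:CB}) directly; note that this theorem also gives the degree bound $e\geq d-n$, which you do not record but which is essential below. The genuine gap is in your endgame, and it is resolved the opposite way from how you set it up. The residual correspondence $\Gamma'$ of degree $d-e$ can \emph{never} dominate $X$: if it did, it would again be a null-trace correspondence, so the same theorem would force $d-e\geq d-n$, while $d-e\leq d-(d-n)=n<d-n$ since $d\geq 2n+1$. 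Hence your case (a) is vacuous (and in any event ``minimality'' is not an available hypothesis --- one must refute every $e\leq d-2$, not only a minimal one). The real situation is always that the residual points sweep out a \emph{proper} subvariety $S\subsetneq X$ of some dimension $s$ with $1\leq s\leq n-1$ (the case $s=0$ being easy to exclude), and your two proposed remedies --- iterating the residual construction, which produces nothing once the residual fails to dominate, or an unspecified Noether--Lefschetz-type rigidity statement --- do not constitute an argument for this case.

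What is actually needed is a quantitative argument on the family of lines. The paper pulls back the tautological $\mathbf{P}^1$-bundle on $\mathbf{G}(1,n+1)$ along the classifying map $y\mapsto l_y$ to obtain $W'\to U$ with evaluation $\psi:W'\to\mathbf{P}^{n+1}$, takes an irreducible component $R$ of the residual divisor dominating $U$, and sets $S=\psi(R)$. Two inputs then close the argument: (i) the Ein--Voisin bound \cite[Proposition 3.8]{BDELU}, that an $s$-dimensional subvariety of a very general hypersurface of degree $d$ has covering gonality at least $d-2n+s$, combined with $\mathrm{cov.gon}(S)\leq\deg(R\to U)$; and (ii) the multiplicity estimate $\mathrm{ord}_R(K_{W'/\mathbf{P}^{n+1}})\geq n-s$ of \cite[Corollary A.6]{BDELU} together with the computation $l\cdot K_{W'/\mathbf{P}^{n+1}}=n$ for a general fiber $l$ of $W'\to U$, which yields $n\geq(n-s)\deg(R\to U)$. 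Input (i) alone is not enough: since $\deg(R\to U)\leq d-e\leq n$, it only gives $d\leq 3n-1$, which is compatible with $d\geq 2n+2$ once $n\geq 3$. Combining (i) and (ii) gives $d\leq 2n-s+n/(n-s)\leq 2n+1$, the desired contradiction. Without a substitute for the relative-canonical multiplicity estimate (ii), your outline does not close.
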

\endgroup

Lopez and Pirola \cite[Theorem 1.3]{LP} classified correspondences with null trace (see Def \ref{def:nulltrace}) of minimum degree on smooth hypersurfaces in $\mathbf{P}^3$. Our results can be seen as a partial generalization to higher dimensions: if we restrict ourselves to null trace correspondences on $\mathbf{P}^n \times X$, we can compute their minimal degree.

As in \cite{Bastianelli}, we notice that the study of $\text{uni.irr}(X)$ is equivalent to the study of correspondences on $\mathbf{P}^n \times X$. In particular, we will show that $\text{corr}(X)=\text{uni.irr}(X)$ (cf. Lemma \ref{lem:1}). From this we deduce our second result, which answers the question of \cite{Bastianelli}:

\begingroup
\setcounter{tmp}{\value{thm}}
\setcounter{thm}{1} 
\renewcommand\thethm{\Alph{thm}}
\begin{thm}
Let $X\subset \mathbf{P}^{n+1}$ be a very general smooth hypersurface of degree $d\geq 2n+2$. Then
\[ \mathrm{stab.irr}(X)=\mathrm{uni.irr}(X)=d-1.\]
\end{thm}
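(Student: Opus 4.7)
The plan is to assemble the statement from three ingredients already at hand: Theorem A, Lemma \ref{lem:1}, and the previously established value $\mathrm{irr}(X)=d-1$ for very general smooth hypersurfaces of degree $d\geq 2n+1$ from \cite{BCD},\cite{BDELU}. In other words, essentially no new geometric input is needed here; the work has been offloaded to Theorem A and to the identification of $\mathrm{corr}$ with $\mathrm{uni.irr}$.

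First I would combine Lemma \ref{lem:1}, which asserts $\mathrm{corr}(X)=\mathrm{uni.irr}(X)$, with Theorem A, which computes $\mathrm{corr}(X)=d-1$. Chaining the two equalities yields $\mathrm{uni.irr}(X)=d-1$ at once, taking care of one half of the statement.

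Next I would invoke the general chain of inequalities recorded in the introduction,
\[ \mathrm{uni.irr}(X) \leq \mathrm{stab.irr}(X) \leq \mathrm{irr}(X). \]
The lower bound $\mathrm{stab.irr}(X)\geq d-1$ is immediate from the first step. For the upper bound, the hypothesis $d\geq 2n+2$ implies $d\geq 2n+1$, so the result of \cite{BCD},\cite{BDELU} applies and gives $\mathrm{irr}(X)=d-1$. Sandwiching $\mathrm{stab.irr}(X)$ between $d-1$ and $d-1$ forces the equality $\mathrm{stab.irr}(X)=d-1$, completing the proof.

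The main obstacle is not in this theorem at all: the substantive content has been absorbed into Theorem A, where the lower bound on $\mathrm{corr}(X)$ requires the actual geometric argument, and into Lemma \ref{lem:1}, where the equivalence between correspondences on $\mathbf{P}^n\times X$ and unirational parametrizations is established. Given those inputs, Theorem B is a short bookkeeping deduction from the chain of inequalities.
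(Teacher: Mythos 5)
Your proposal is correct and coincides with the paper's own proof: both deduce $\mathrm{uni.irr}(X)=\mathrm{corr}(X)=d-1$ from Lemma \ref{lem:1} and Theorem A, then sandwich $\mathrm{stab.irr}(X)$ via $d-1=\mathrm{irr}(X)\geq \mathrm{stab.irr}(X)\geq \mathrm{uni.irr}(X)=d-1$. No further comment is needed.
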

\endgroup

In particular, we have the following 
\begingroup
\setcounter{tmp}{\value{thm}}
\setcounter{thm}{2} 
\renewcommand\thethm{\Alph{thm}}
\begin{cor}
Let $X\subset \mathbf{P}^{n+1}$ be a very general smooth hypersurface of degree $d\geq 2n+2$. Then
\[ \mathrm{irr}(X\times \mathbf{P}^m)=\mathrm{irr}(X), \]
for any integer $m\geq 1$.
\end{cor}
\endgroup

Totaro \cite{Totaro} showed that a very general hypersurface $X\subset \mathbf{P}^{n+1}$ of degree $d\geq 2\lceil (n+2)/3 \rceil$ is not stably rational. A couple of years later, Schreieder \cite{SS} found a very striking lower bound where $d\geq \log_2n+2$, which means that most of hypersurfaces are not stably rational. Therefore, one has $\text{stab.irr}(X)>1$ if $d\geq \log_2n+2$. It's interesting to ask further what is the stable irrationality of a degree $d$ hypersurface in this range.

On the other hand, Bastianelli, Ciliberto, Flamini and Supino \cite[Section 5.2]{BCFS} conjectured that 
\[ \text{conn.gon}(X) \leq d-\left \lfloor{\frac{\sqrt{8n+9}-1}{2}}\right \rfloor < d-1=\text{uni.irr}(X).\]
This means that even though it's very hard to determine whether rationally connected varieties are unirational (equivalently whether $\text{conn.gon}(X)=1$ implies $\text{uni.irr}(X)=1$), when $d$ is large these two invariants should capture very different phenomena.

For the proof of Theorem A, we first show that if the degree of a correspondence is less or equal than $d-2$, then one can find on $X$ a relatively large subvariety with bounded covering gonality; this is impossible for very general hypersurface. The method is essentially the same as \cite{BDELU} but the difference is that we work directly on the correspondence instead of passing to the Grassmannian.

In $\S 2$ we discuss some properties of correspondences with null trace and $\S 3$ is devoted to the proof of the main theorems.


We are grateful to Daniele Agostini, Radu Laza, Rob Lazarsfeld, David Stapleton and Zhiwei Zheng for helpful discussions. We want to especially thank Francesco Bastianelli for reading a previous version of this note.

\section{Correspondences}
In this section, we sketch some basic properties of correspondences following \cite{BCD}.

Let $X$ and $Y$ be smooth irreducible complex projective varieties of dimension $n$.

\begin{definition}
An correspondence of $Y$-degree $m$ on $Y\times X$ is a reduced pure $n$-dimensional subvariety $\Gamma \subset Y\times X$ such that the projections $\pi_1:\Gamma \to Y$, $\pi_2:\Gamma \to X$ are generically finite dominant morphisms with $\text{deg}(\pi_1)=m$.
\end{definition}

Recall that for any correspondence $\Gamma \subset Y\times X$, one has \textit{Mumford's trace map} (cf.\cite{BCD}):
\[ \text{Tr}_{X/Y}: H^0(X,K_X) \to H^0(Y,K_Y).\]
In brief, $\text{Tr}_{X/Y}(\omega)=\text{Tr}_{\Gamma/Y}(\pi_2^\ast \omega)$, where $\text{Tr}_{\Gamma/Y}$ is the trace map associated to the generically finite morphism $\Gamma \to Y$.

\begin{definition}\label{def:nulltrace}
A correspondence $\Gamma \subset Y\times X$ has \textit{null trace} to $Y$ if the associated trace map is identically zero. 
\end{definition}


Using the Cayley-Bacharach properties, correspondences with null trace on a smooth hypersurface are analyzed by Bastianelli, Cortini and De Poi in \cite[Theorem 2.5]{BCD}. Their result is
\begin{thm}\label{thm:CB}
Let $X \subset \mathbf{P}^{n+1}$ be a smooth hypersurface of degree $d\geq n+3$ and let 
\[ \Gamma \subset Y\times X \]
be a correspondence of $Y$-degree $m$ with null trace to $Y$. Let $y \in Y$ be a point such that  $\text{dim }\pi_1^{-1}(y)=0$ and let $\pi_1^{-1}(y)=\{ (y,x_i) \in \Gamma \ | \ i=1,\ldots,m \}$ where the $x_i$ are distinct points. Then 
\[ m\geq d-n,\]
and if $m\leq 2d-2n-3$, then the $0$-cycle $Z_y=\sum_{i=1}^m x_i$ lies on a line in $\mathbf{P}^{n+1}$. 
\end{thm}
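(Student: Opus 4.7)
My plan is to prove the theorem in two stages: first, convert the null trace hypothesis into a Cayley-Bacharach condition on the fiber $Z_y$, viewed as a finite subscheme of $\mathbf{P}^{n+1}$; second, deduce both the bound $m \geq d-n$ and the collinearity conclusion from this combinatorial property.

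\textbf{Stage 1 (null trace implies Cayley-Bacharach).} In an étale neighborhood of $y$ the map $\pi_1$ splits as a disjoint union of local sections $s_1,\ldots,s_m$ with $s_i(y) = (y,x_i)$, and Mumford's trace formula expresses $\text{Tr}_{X/Y}(\omega)$ locally at $y$ as $\sum_i J_i\,\omega(x_i)\,dy$, where $J_i = \det d(\pi_2\circ s_i)(y) \neq 0$ by the étale hypothesis. Adjunction gives $\omega_X \cong \mathcal{O}_X(d-n-2)$, so every $\omega \in H^0(X,K_X)$ is represented by a homogeneous polynomial $F$ of degree $d-n-2$ on $\mathbf{P}^{n+1}$, and the null trace condition becomes $\sum_i J_i F(x_i) = 0$ for every such $F$. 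If $F$ vanishes at $m-1$ of the $x_i$, the remaining term forces vanishing at the last point too, establishing the Cayley-Bacharach property for $Z_y$ with respect to $|\mathcal{O}_{\mathbf{P}^{n+1}}(d-n-2)|$.

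\textbf{Stage 2a (the lower bound).} Setting $k = d-n-2$, I would suppose for contradiction that $m \leq d-n-1$. Choose hyperplanes $H_1,\ldots,H_{m-1}$ in $\mathbf{P}^{n+1}$ with $x_j \in H_j$ and $x_m \notin H_j$, then multiply their product by a form of degree $k-m+1$ not vanishing at $x_m$ to produce a polynomial of degree exactly $k$ vanishing on $\{x_1,\ldots,x_{m-1}\}$ but not at $x_m$. This contradicts Cayley-Bacharach and yields $m \geq d-n$.

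\textbf{Stage 2b (collinearity).} The hypothesis $m \leq 2d-2n-3$ rewrites as $m \leq 2k+1$, and the content here is the classical statement that any set of $\leq 2k+1$ distinct points in $\mathbf{P}^{n+1}$ satisfying CB($k$) is collinear. I would prove this by induction on $k$ via a residual construction: if a hyperplane $H$ meets $Z_y$ in at least two but not all of its points, then for $x \in Z_y\setminus H$ and any degree $k-1$ form $F'$ vanishing on $(Z_y\setminus H)\setminus\{x\}$, the product $L_H \cdot F'$ (with $L_H$ a defining form for $H$) is a degree $k$ form vanishing on $Z_y\setminus\{x\}$, so Cayley-Bacharach forces $F'(x) = 0$. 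Hence $Z_y\setminus H$ satisfies CB($k-1$) with cardinality $\leq 2(k-1)+1$, and the induction closes. The main obstacle is precisely this inductive step: one must argue that a suitable hyperplane $H$ can always be chosen (using non-collinearity together with nondegeneracy considerations) and that the sharp bound $|Z|\leq 2k+1$ propagates to the residual set at each level. This delicate bookkeeping is the technical heart of \cite{BCD} and is what pins the cutoff at exactly $m \leq 2d-2n-3$.
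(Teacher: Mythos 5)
First, be aware that the paper does not prove this statement: it is quoted verbatim from \cite{BCD} (their Theorem 2.5) and used as a black box, so there is no in-paper argument to compare with. Your overall strategy is the one actually used in \cite{BCD}: adjunction identifies $H^0(X,K_X)$ with degree $d-n-2$ forms, the local expression of Mumford's trace turns the null trace hypothesis into the Cayley--Bacharach condition $\mathrm{CB}(k)$ for $Z_y$ with $k=d-n-2$, and the bound $m\ge k+2=d-n$ follows by separating one point from the others with a product of at most $k$ hyperplanes. One caveat in Stage 1: the hypothesis on $y$ only makes $\pi_1$ unramified over $y$, so $J_i=\det d(\pi_2\circ s_i)(y)$ need not be nonzero; if some $J_i=0$ the identity $\sum_i J_iF(x_i)=0$ yields no information at $x_i$. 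This is harmless for general $y$ (the only case the present paper uses, since $\pi_2$ is generically unramified), but as stated for an arbitrary $y$ with reduced fiber it needs an extra argument.

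The genuine gap is in Stage 2b, which carries the entire content of the collinearity assertion. Your residuation step is correct --- if $2\le |Z_y\cap H|<m$ then $Z_y\setminus H$ satisfies $\mathrm{CB}(k-1)$ and has at most $2(k-1)+1$ points --- but the induction does not close as written: the inductive hypothesis only tells you that $Z_y\setminus H$ is collinear, which by itself says nothing about $Z_y$, and you explicitly defer the remaining bookkeeping to \cite{BCD}. The induction can be closed by feeding Stage 2a back in. If $Z_y$ is not collinear, choose $H$ containing two points of $Z_y$ and missing a third; since $Z_y\setminus H$ is nonempty and satisfies $\mathrm{CB}(k-1)$, the Stage 2a bound gives $|Z_y\setminus H|\ge k+1$, hence $|Z_y\cap H|\le k$ (for $k=1$ this already contradicts $|Z_y\setminus H|\le 2(k-1)+1$, which settles the base case). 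By induction $Z_y\setminus H$ lies on a line $\ell$; since $Z_y\not\subset\ell$ there is a hyperplane $H'\supset\ell$ missing some point of $Z_y$, and residuating once more produces a nonempty set $Z_y\setminus H'\subset Z_y\cap H$ of at most $k$ points satisfying $\mathrm{CB}(k-1)$, contradicting the same lower bound. (Alternatively, one can argue directly that failure of separation by products of at most $k$ hyperplanes forces, through each point of $Z_y$, a line carrying at least $k+2$ points of $Z_y$; two distinct such lines would give $|Z_y|\ge 2k+3>2k+1$.) Some such argument must be supplied: as written, the crucial step is asserted rather than proved.
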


We will work with the following
\begin{set-up}\label{set-up:1}
Denote by $X \subset \mathbf{P}^{n+1}$ a very general smooth hypersurface of degree $d$, and suppose given a correspondence
$ \Gamma \subset \mathbf{P}^n \times X$ of $\mathbf{P}^n$-degree $m$. 
We assume that 
\[ d\geq 2n+2 \text{ and } m\leq d-2.\]

\end{set-up}


\begin{cor}\label{cor:BCD}
Assume that we are in the situation of \ref{set-up:1}. For general $y \in \mathbf{P}^n$, define $Z_y$ as in the previous theorem. Then we have

\begin{enumerate}
\item $m\geq d-n$. \label{item:1.1}
\item $Z_y$ lies on a line $l_y \subset \mathbf{P}^{n+1}$. \label{item:1.2}
\end{enumerate}
\end{cor}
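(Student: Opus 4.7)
The plan is to deduce Corollary \ref{cor:BCD} as a direct application of Theorem \ref{thm:CB} with $Y = \mathbf{P}^n$. The task therefore reduces to verifying that the hypotheses of Theorem \ref{thm:CB} are automatically satisfied under Set-Up \ref{set-up:1}, together with a short genericity argument on the fiber over $y$.

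The crucial (and only nontrivial) observation I would highlight first is that every correspondence $\Gamma \subset \mathbf{P}^n \times X$ has null trace to $\mathbf{P}^n$ \emph{for free}. Indeed, Mumford's trace map has target
\[ H^0(\mathbf{P}^n, K_{\mathbf{P}^n}) \;=\; H^0(\mathbf{P}^n, \mathcal{O}_{\mathbf{P}^n}(-n-1)) \;=\; 0, \]
so the null trace hypothesis in Theorem \ref{thm:CB} holds trivially. This is the key conceptual point; without it Theorem \ref{thm:CB} would not apply.

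Next I would verify the numerical hypotheses. The degree bound $d \geq n+3$ demanded by Theorem \ref{thm:CB} follows from $d \geq 2n+2$ (valid for $n \geq 1$). For the line conclusion we need $m \leq 2d-2n-3$; this is obtained from the chain
\[ m \;\leq\; d-2 \;\leq\; 2d-2n-3, \]
where the second inequality is equivalent to $d \geq 2n+1$ and is therefore implied by Set-Up \ref{set-up:1}. Finally, choosing $y$ in the dense open locus where $\pi_1 \colon \Gamma \to \mathbf{P}^n$ is étale (which exists because $\pi_1$ is generically finite of degree $m$) guarantees both that $\dim \pi_1^{-1}(y) = 0$ and that the fiber consists of $m$ distinct points $(y,x_i)$, matching the setup of Theorem \ref{thm:CB}.

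Applying Theorem \ref{thm:CB} then immediately yields $m \geq d-n$, proving item \eqref{item:1.1}, and the conclusion that $Z_y = \sum_i x_i$ lies on a line $l_y \subset \mathbf{P}^{n+1}$, proving item \eqref{item:1.2}. There is no serious obstacle: the whole statement is a bookkeeping translation of Theorem \ref{thm:CB} to the special case $Y = \mathbf{P}^n$, with the automatic vanishing of $H^0(\mathbf{P}^n, K_{\mathbf{P}^n})$ doing all the work.
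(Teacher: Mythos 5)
Your proof is correct and follows exactly the paper's argument: the null trace condition is automatic since $H^0(\mathbf{P}^n,K_{\mathbf{P}^n})=0$, and the numerical hypotheses $d\geq n+3$ and $m\leq 2d-2n-3$ follow from $d\geq 2n+2$ and $m\leq d-2$, so Theorem \ref{thm:CB} applies directly. The only difference is that you spell out the genericity of $y$ and the arithmetic more explicitly than the paper does.
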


\begin{proof}
Notice that $\Gamma$ has null trace to $\mathbf{P}^n$ because $H^{0}(\mathbf{P}^n,K_{\mathbf{P}^n})=\{0\}$. Moreover the pair $(d,m)$ satisfies the condition $m\leq 2d-2n-3$. Therefore Theorem \ref{thm:CB} applies.
\end{proof}

\section{Proofs}
In this section, we give the proof of main theorems in the introduction. We will establish Theorem A first.

We assume until the end of the proof of Theorem A that we are in the situation of \ref{set-up:1}. Notice that any rational covering $X\dashrightarrow \mathbf{P}^n$ of degree $\delta$ gives rise to a correspondence of $\mathbf{P}^n$-degree $\delta$ on $\mathbf{P}^n\times X$. Hence by \cite[Theorem C]{BDELU} we have 
\[ \text{corr}(X)\leq \text{irr}(X)=d-1.\]
Therefore it suffices to show that $\text{corr}(X)\geq d-1$ and we will argue by contradiction.

Since we are in the situation of \ref{set-up:1}, by Corollary \ref{cor:BCD} one has a classifying map:
\[ \phi: U \to \mathbf{G}=\mathbf{G}(1,n+1).\]
Here $U$ is the Zariski-open subset of $\mathbf{P}^n$ where the fiber $Z_y=\pi_1^{-1}(y)$ consists of $m$ distinct points. Note that $U$ being open in $\mathbf{P}^n$ is a rational variety itself. Another observation is that $\phi$ is a generically finite map onto its image because $\pi_2:\Gamma \to X$ is generically finite.

Now we have the following diagram:
\[
\begin{CD}
 W'   @>\phi'>>  W @>\mu >> \mathbf{P}^{n+1} \\
@V\pi' VV                                    @V\pi VV \\
U               @>\phi>>       \mathbf{G}
\end{CD}
\]
Here $\pi:W \to \mathbf{G}$ is the tautological $\mathbf{P}^1$-bundle on $\mathbf{G}$, $\mu:W \to \mathbf{P}^{n+1}$ is the evaluation map and 
\[ W'=_{\text{def}}\phi^\ast W\]
is the pullback of $W$ via the classifying map $\phi$.

\begin{claim}\label{claim:1}
$W'$ is an irreducible $n+1$-dimensional variety and $\psi=_{\text{def}} \mu \circ \phi'$ is dominant onto $\mathbf{P}^{n+1}$.
\end{claim}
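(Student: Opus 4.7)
The plan is to handle the two assertions separately. For the irreducibility and dimension of $W'$, I would use that by construction $\pi': W' \to U$ is the pullback of the tautological $\mathbf{P}^1$-bundle $\pi: W \to \mathbf{G}$ along $\phi$, and is therefore itself a Zariski-locally trivial $\mathbf{P}^1$-bundle. Since $U$ is a nonempty Zariski-open subset of $\mathbf{P}^n$, it is irreducible of dimension $n$, so $W'$ is irreducible of dimension $n+1$. This first step is routine.

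For the dominance of $\psi$, my strategy is to show that $\overline{\psi(W')}$ contains $X$ and then rule out equality. First I would note that $\psi$ sends the fiber of $\pi'$ over $y \in U$ isomorphically onto the line $l_y \subset \mathbf{P}^{n+1}$, so set-theoretically $\psi(W') = \bigcup_{y \in U} l_y$. By Corollary \ref{cor:BCD}(\ref{item:1.2}), each $Z_y$ is supported on $l_y$, so $\bigcup_{y \in U} Z_y \subset \psi(W')$. Because $\pi_2: \Gamma \to X$ is dominant and $\pi_1^{-1}(U)$ is open dense in $\Gamma$, the union $\bigcup_{y \in U} Z_y = \pi_2(\pi_1^{-1}(U))$ is constructible and dense in $X$; hence $X \subset \overline{\psi(W')}$.

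The main obstacle will be excluding the case $\overline{\psi(W')} = X$. Since $\overline{\psi(W')}$ is irreducible (as the closure of the image of the irreducible variety $W'$) and contains the hypersurface $X$, it equals either $X$ or $\mathbf{P}^{n+1}$. If $\overline{\psi(W')} = X$, then every line $l_y$ with $y \in U$ would be contained in $X$, producing an $n$-dimensional family of lines in $X$; here one uses that the classifying map $\phi$ is generically finite onto its image (as observed before the claim), so $\dim \phi(U) = n$. I would rule this out by invoking the very generality of $X$: for $d \geq 2n+2$ the expected dimension $2n - d - 1$ of the Fano scheme $F_1(X) \subset \mathbf{G}$ is strictly negative, and for a very general smooth hypersurface of degree $d$ in this range $F_1(X)$ is actually empty, giving the desired contradiction and forcing $\overline{\psi(W')} = \mathbf{P}^{n+1}$.
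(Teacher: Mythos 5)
Your proposal is correct, and its skeleton coincides with the paper's: the $\mathbf{P}^1$-bundle structure of $\pi'\colon W'\to U$ gives irreducibility and $\dim W'=n+1$ for free, and the dominance of $\psi$ is proved by contradiction after observing that $\overline{\psi(W')}$ contains $X$ (because $\pi_2\colon\Gamma\to X$ is dominant and the cycles $Z_y$ lie on the lines $l_y$ swept out by $\psi$). The only genuine divergence is the final contradiction. You exclude $\overline{\psi(W')}=X$ by noting that this would force every $l_y$ to lie in $X$, and then invoke the emptiness of the Fano scheme $F_1(X)$ for a very general hypersurface of degree $d\geq 2n+2$ (expected dimension $2n-1-d<0$). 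The paper instead observes that $X$ would be uniruled, which is impossible because $K_X=\mathcal{O}_X(d-n-2)$ is ample for $d> n+2$. Both contradictions are valid here; the paper's is marginally more robust, since it applies to \emph{every} smooth hypersurface of degree $\geq n+3$ and uses no genericity, whereas your Fano-scheme argument genuinely needs $X$ general (a special smooth hypersurface of large degree can contain lines). In the present setting $X$ is very general by hypothesis, so nothing is lost, and your version has the small advantage that even a single line in $X$ already yields the contradiction. One cosmetic remark: the generic finiteness of $\phi$ and the resulting $n$-dimensional family of lines are not needed for your argument, since $F_1(X)=\emptyset$ rules out any line at all.
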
 

\begin{proof}
Notice that $\pi':W'\to U$ is a $\mathbf{P}^1$-bundle and $U$ is irreducible, so $W'$ must be irreducible. Since $\text{dim }\psi(W')\leq n+1$, it suffices to show that $\psi$ is dominant. We prove this by contradiction. Suppose $\text{dim }\psi(W')\leq n$. Since $\Gamma \to X$ is dominant and an open subset of $\Gamma$ is contained in $W'$ by Corollary \ref{cor:BCD}, this would imply that $X$ contains $\psi(W')$ as an open subset. 
Therefore $X$ is uniruled, but this is impossible since $\text{deg}(X)$ is greater than $n+1$.
\end{proof}

\begin{proof}[Proof of Theorem A]
Recall that we are in the situation of \ref{set-up:1}, where $\Gamma \subset \mathbf{P}^n\times X$ is a correspondence of $\mathbf{P}^n$-degree $m\leq d-2$ by contradiction. Define $\Gamma'$ to be the restriction of $\Gamma$ to $U\times \mathbf{P}^n$. By Corollary \ref{cor:BCD}, $\Gamma'$ is a divisor in $W'$ of relative degree $m$ over $U$. Let $X'$ be the full pre-image of $X$ in $W'$ so that $X'$ is a divisor in $W'$ of relative degree $d$ over $U$. We can write 
\[ X'=\Gamma' + F, \]
where $F$ is a divisor of relative degree $d-m\geq 2$ over $U$. Now fix any irreducible component $R\subset F$ that dominates $U$ and view $R$ as a reduced irreducible variety of dimension $n$. Thus $R$ sits in a diagram
\begin{eqnarray} \begin{CD}
X @<<< R \\
@VVV   @VVV\\
\mathbf{P}^{n+1} @<<\psi=\mu\circ \phi'< W' @>>> U\\
\end{CD}
\end{eqnarray}
and we have
\begin{eqnarray} 0< e=_{\text{def}}\text{deg}(R\to U) \leq d-m.\end{eqnarray}

Put 
\begin{eqnarray} S=_{\text{def}} \psi(R) \subset X, \end{eqnarray}
and let $s=\text{dim} S$. Suppose first that $s=0$, i.e. $S$ consists of a single point $p\in X$. But this would imply $\text{deg}(\Gamma \to U)=d-1$, which contradicts with our assumption. Therefore we may assume that $1\leq s\leq n-1$.

Note next that $\text{cov.gon}(S)\leq e$. In fact, one can choose a rational subvariety $L\subset U$ of dimension $s$ with the property that an irreducible component $R^\ast \subset R$ of the inverse image of $L$ in $R$ is generically finite over $S$. Since $\text{deg}(R^\ast \to L)\leq e$, and since $L$ is rational, we see that $\text{cov.gon}(R^\ast)\leq e$. Hence \cite[lemma 1.9]{BDELU} applies to show that $\text{cov.gon}(S)\leq e$.

Now denote by $K_{W'/\mathbf{P}^{n+1}}$ the relative canonical bundle of $\psi$, and consider a general fiber $l=l_y$ of $(W'\to U)$. We assert: \footnote{Notice that even though we are working on an open variety, this intersection product still makes sense because we are intersecting a divisor with the fiber of a proper map.} \footnote{Bastianelli pointed out to me that it is possible to avoid this assertion by passing to the Grassmannian and argue as in \cite{BDELU}.}
\begin{eqnarray}\label{eqn:intersection}
l\cdot K_{W'/\mathbf{P}^{n+1}}=n.\end{eqnarray} 
Grant this for now. 
Since $\text{dim }\psi(R)=s$, by \cite[Corollary A.6]{BDELU} we have 
\begin{eqnarray} \text{ord}_R(K_{W'/\mathbf{P}^{n+1}})\geq n-s.\end{eqnarray}
Hence we must have
\[ n=l\cdot K_{W'/\mathbf{P}^{n+1}}\geq \text{ord}_R(K_{W'/\mathbf{P}^{n+1}})\cdot l\cdot R\geq (n-s)\cdot\text{deg}(R\to U)\geq (n-s)e.\]
Now recall that we assume $s\geq 1$. Then it follows from the computations of Ein and Voisin \cite[Proposition 3.8]{BDELU} that 
\[ e\geq \text{con.gon}(S)\geq d-2n+s.\]
One finds that 
\[ d \leq 2n-s+e \leq 2n-s+\frac{n}{n-s}\leq 2n+1.\]
which is impossible since $d\geq 2n+2$.

It remains to prove (\ref{eqn:intersection}). We consider the restriction of the tangent map $T_{W'} \to \psi^\ast T_{\mathbf{P}}$ to $l\cong \mathbf{P}^1$. By the Euler sequence, one has 
\[ \psi^\ast T_{\mathbf{P}}|_l\cong \mathcal{O}_{\mathbf{P}^1}(1)^{\oplus n}\oplus \mathcal{O}_{\mathbf{P}^1}(2). \]
For $T_{W'}|_l$, we have the following exact sequence:
\[ 0 \to T_{W'/U}|_l \to T_{W'}|_l \to \pi'^\ast T_U |_l \to 0. \]
The first term is isomorphic to $T_l\cong \mathcal{O}_{\mathbf{P}^1}(2)$, and the third term is isomorphic to $\mathcal{O}_{\mathbf{P}^1}^{\oplus{n}}$. Notice that this exact sequence of vector bundles splits because
\[ \text{Ext}_{\mathbf{P}^1}(\mathcal{O}_{\mathbf{P}^1}^{\oplus{n}},\mathcal{O}_{\mathbf{P}^1}(2))\cong H^1(\mathbf{P}^1,\mathcal{O}(2))^{\oplus{n}}=\{0\}.\]
Hence we have 
\[ T_{W'}|_l \cong \mathcal{O}_{\mathbf{P}^1}^{\oplus{n}} \oplus \mathcal{O}_{\mathbf{P}^1}(2).\]
Therefore the restriction of the tangent map to $l\cong \mathbf{P}^1$ becomes 
\[ \mathcal{O}_{\mathbf{P}^1}^{\oplus{n}} \oplus \mathcal{O}_{\mathbf{P}^1}(2) \to \mathcal{O}_{\mathbf{P}^1}(1)^{\oplus n}\oplus \mathcal{O}_{\mathbf{P}^1}(2),\]
whose degeneracy locus is thus given by a linear form of degree $n$ on $\mathbf{P}^1$. Since a general fiber doesn't lie in the ramification locus, we must have 
\[ l \cdot K_{W'/\mathbf{P}^{n+1}}=n.\] 
\end{proof}

Now we turn to the proof of Theorem B. We first establish a lemma connecting $\text{corr}(X)$ and $\text{uni.irr}(X)$.

\begin{lem}\label{lem:1}
Let $X$ be an irreducible smooth projective variety, then 
\[ \mathrm{uni.irr}(X) = \mathrm{corr}(X).\]
\end{lem}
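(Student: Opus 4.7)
The plan is to prove the equality by establishing both inequalities separately, each by a direct construction that converts one object (correspondence / unirational cover) into the other.

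For $\mathrm{uni.irr}(X) \le \mathrm{corr}(X)$: take a correspondence $\Gamma \subset \mathbf{P}^n \times X$ of minimal $\mathbf{P}^n$-degree $m = \mathrm{corr}(X)$. The second projection $\pi_2 \colon \Gamma \to X$ is, by the definition of a correspondence, a generically finite dominant map, so $\Gamma$ itself qualifies as a variety with a rational covering of $X$. Moreover, the first projection $\pi_1 \colon \Gamma \to \mathbf{P}^n$ is a degree $m$ rational map to $\mathbf{P}^n$, which shows $\mathrm{irr}(\Gamma) \le m$. Combining these, $\mathrm{uni.irr}(X) \le \mathrm{irr}(\Gamma) \le m$.

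For $\mathrm{corr}(X) \le \mathrm{uni.irr}(X)$: pick $T$ realizing the minimum, so there is a rational covering $g \colon T \dashrightarrow X$ with $\mathrm{irr}(T) = \delta = \mathrm{uni.irr}(X)$. Because $T$ is generically finite over $X$, we have $\dim T = n$, and by definition of $\mathrm{irr}(T)$ there is a rational map $f \colon T \dashrightarrow \mathbf{P}^n$ of degree $\delta$. Consider the joint map $(f,g) \colon T \dashrightarrow \mathbf{P}^n \times X$, and let $\Gamma$ be the closure of its image. Since $f$ is dominant, so is $\pi_1|_\Gamma$; since $g$ is dominant, so is $\pi_2|_\Gamma$; and since $f$ is generically finite, $(f,g)$ is generically finite onto $\Gamma$, whence $\dim \Gamma = n$ and $\Gamma$ is a genuine correspondence. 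The factorization $f = \pi_1 \circ (f,g)$ yields $\delta = \deg((f,g) \colon T \to \Gamma) \cdot \deg(\pi_1|_\Gamma)$, so $\deg(\pi_1|_\Gamma) \le \delta$. Hence $\mathrm{corr}(X) \le \delta$.

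Both constructions are essentially definition-chases; there is no deep obstacle. The only points requiring verification are dimensional: one must check that $\Gamma$ constructed in the second step is pure $n$-dimensional and that both projections remain generically finite and dominant, so that $\Gamma$ fits the definition of a correspondence rather than merely a dominating subvariety. After that, the degree relation $\deg(\pi_1|_\Gamma) \le \delta$ is immediate from multiplicativity of degrees in the factorization $f = \pi_1 \circ (f,g)$.
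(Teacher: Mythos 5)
Your proof is correct and follows essentially the same route as the paper: the inequality $\mathrm{corr}(X)\leq \mathrm{uni.irr}(X)$ via the closure of the graph of $(f,g)$ and multiplicativity of degrees in the factorization through $\Gamma$, and the reverse inequality (which the paper dismisses as obvious) by using the minimal correspondence $\Gamma$ itself as the unirational cover. You merely spell out the dimension and genericity checks that the paper's sketch leaves implicit.
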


\begin{proof}
Let $T$ be a smooth $n$-dimensional variety with two dominant rational maps 
\[ f:T\dashrightarrow \mathbf{P}^n, g:T \dashrightarrow X. \] 
By considering the closure of the graph of $f$ and $g$, we see that $T$ maps onto a correspondence $\Gamma \subset \mathbf{P}^n\times X$ and $\text{deg}(f)$ is a multiple of $\text{deg}(\Gamma \to \mathbf{P}^n)$. Hence $\text{uni.irr}(X)\geq \text{corr}(X)$. The other inequality is obvious.
\end{proof}

\begin{proof}[Proof of Theorem B]

By Lemma \ref{lem:1} and Theorem A, one has
\[ \text{uni.irr}(X)=\text{corr}(X)=d-1.\]
On the other hand, by \cite[Theorem C]{BDELU} and \cite[Lemma 2.2]{Bastianelli}
\[ d-1=\text{irr}(X)\geq \text{stab.irr}(X) \geq \text{uni.irr}(X),\]
we conclude that $\text{stab.irr}(X)= \text{uni.irr}(X)=d-1$.
\end{proof}

\bibliographystyle{abbrv}
\bibliography{irrationality}{}

\begin{thebibliography}{1}

\bibitem{Bastianelli}
F.~Bastianelli.
\newblock On irrationality of surfaces in {$\Bbb{P}^3$}.
\newblock {\em J. Algebra}, 488:349--361, 2017.

\bibitem{BCFS}
F.~{Bastianelli}, C.~{Ciliberto}, F.~{Flamini}, and P.~{Supino}.
\newblock {Gonality of curves on general hypersufaces}.
\newblock {\em ArXiv e-prints}, July 2017.

\bibitem{BCD}
F.~Bastianelli, R.~Cortini, and P.~De~Poi.
\newblock The gonality theorem of {N}oether for hypersurfaces.
\newblock {\em J. Algebraic Geom.}, 23(2):313--339, 2014.

\bibitem{BDELU}
F.~Bastianelli, P.~De~Poi, L.~Ein, R.~Lazarsfeld, and B.~Ullery.
\newblock Measures of irrationality for hypersurfaces of large degree.
\newblock {\em Compos. Math.}, 153(11):2368--2393, 2017.

\bibitem{LP}
A.~F. Lopez and G.~P. Pirola.
\newblock On the curves through a general point of a smooth surface in {$\bold
  P^3$}.
\newblock {\em Math. Z.}, 219(1):93--106, 1995.

\bibitem{SS}
S.~Schreieder.
\newblock Stably irrational hypersurfaces of small slopes.
\newblock {\em Journal of the American Mathematical Society}, 32(4):1171--1199,
  2019.

\bibitem{Totaro}
B.~Totaro.
\newblock Hypersurfaces that are not stably rational.
\newblock {\em J. Amer. Math. Soc.}, 29(3):883--891, 2016.

\end{thebibliography}

\end{document}